\documentclass[12pt]{amsart}

\usepackage{graphicx}
\usepackage{subfigure}
\usepackage{eepic}
\usepackage{xcolor}
\selectcolormodel{gray}
\usepackage{tikz}
\usepackage{amsmath}
\usepackage{a4wide}
\usepackage[utf8]{inputenc}
\usepackage{amssymb}
\usepackage{amsopn}
\usepackage{epsfig}
\usepackage{amsfonts}
\usepackage{latexsym}
\usepackage{amsthm}
\usepackage{enumerate}
\usepackage[UKenglish]{babel}
\usepackage{verbatim}
\usepackage{color}
%\input prepictex
%\input pictexwd
%\input postpictex

 % Stretch the line space.

\newtheorem{theorem}{Theorem}[section]
\newtheorem{lemma}[theorem]{Lemma}
\newtheorem{proposition}[theorem]{Proposition}

\theoremstyle{definition}

\theoremstyle{remark}
\newtheorem{remark}[theorem]{Remark}

\numberwithin{equation}{section}

%=================Micros =============%

\renewcommand{\emptyset}{\varnothing}

%\newcommand{\wu}{\widetilde\mathbf{U}}
%\newcommand{\pa}[2]{\mathcal {P}_{#1}^{#2}}

%\newcommand{\om}{\omega}

%=================== End ======================%

\begin{document}

\title[Positive Lebesgue measure and empty interior]{An infinitely generated self-similar set with positive Lebesgue measure and empty interior}

\author{Simon Baker and Nikita Sidorov}
\address{Mathematics institute, University of Warwick, Coventry, CV4 7AL, UK}
\email{simonbaker412@gmail.com}

\address{
School of Mathematics, The University of Manchester,
Oxford Road, Manchester M13 9PL, United Kingdom.}
\email{sidorov@manchester.ac.uk}

\date{\today}

\subjclass[2010]{}

\begin{abstract}
In \cite{PS} Peres and Solomyak asked the question: Do there exist self-similar sets with positive Lebesgue measure and empty interior? This question was answered in the affirmative by Cs\"{o}rnyei et al in \cite{CJPPS}. The authors of that paper gave a parameterised family of iterated function systems for which almost all of the corresponding self-similar sets satisfied the required properties. They did not however provide an explicit example. Motivated by a desire to construct an explicit example, we provide an explicit construction of an infinitely generated self-similar set with positive Lebesgue measure and empty interior.
\end{abstract}

\keywords{Self-similar sets, Lebesgue measure, Interior}
\maketitle

\section{Introduction}
We call a map $\phi:\mathbb{R}^d\to\mathbb{R}^d$ a contracting similarity if $|\phi(x)-\phi(y)|=r|x-y|$ for all $x,y\in\mathbb{R}^d$ for some $r\in(0,1)$. A well known result due to Hutchinson \cite{Hut} states that given a finite collection of contracting similarities $\Phi:=\{\phi_i\}_{i=1}^l$, there exists a unique, non-empty, compact set $K\subset \mathbb{R}^d$ satisfying $$K=\bigcup_{i=1}^{l}\phi_{i}(K).$$ Such a $K$ is called the self-similar set generated by $\Phi$. In what follows we refer to a finite collection of contracting similarities as an iterated function system or IFS for short. When there exists an open set $O\subset \mathbb{R}^d$ satisfying $\phi_i(O)\subset O$ for all $1\leq i\leq l$, and $\phi_{i}(O)\cap \phi_j(O)=\emptyset$ for all $i\neq j,$ much can be said about the metric properties of $K$. This assumption is called the open set condition. Assuming the open set condition, if we let $r_i$ denote the contraction rate of $\phi_i,$ then the Hausdorff dimension of $K$ is the unique $s\geq 0$ satisfying $\sum_{i=1}^lr_i^s=1$. In \cite{Sch} Schief proved that if the open set condition holds and $\sum_{i=1}^lr_i^d=1,$ then $K$ not only has positive Lebesgue measure but also has non-empty interior. This naturally gave rise to the following question posed in \cite{PS} by Peres and Solomyak: Do there exist self-similar sets with positive Lebesgue measure and empty interior? This question was answered in the affirmative by Cs\"{o}rnyei et al in \cite{CJPPS}. Their approach relied on ideas due to Jordan and Pollicott \cite{JP}. In \cite{CJPPS} the authors constructed a parameterised family of IFSs for which almost all (in the sense of some measure) of the corresponding self-similar sets have positive Lebesgue measure and empty interior. Importantly they did not provide an explicit example. The motivation behind this paper comes from a desire to construct an explicit example of a self-similar set satisfying the desired properties. We do not succeed in this goal, but instead construct an example of an infinitely generated self-similar set with positive Lebesgue measure and empty interior. Importantly, as we will explain below, a suitable analogue of the question of Peres and Solomyak persists for infinitely generated self-similar sets. In which case our main result does provide an explicit example for this question.

%We emphasise here that examples of infinitely generated self-similar sets with positive Lebesgue measure and empty interior were not even known to exist.

Given a countable collection of contracting similarities $\Phi:=\{\phi_i\}_{i=1}^{\infty},$ we say that $\Phi$ is contractive if $\sup_i\{r_i\}<1$. Moran showed in \cite{Mor} that when $\Phi$ is contractive and relatively compact in the space of contractions on $\mathbb{R}^d$ endowed with the topology of uniform convergence over bounded sets, then one can define an appropriate analogue of a self-similar set for $\Phi$. We call this analogue an infinitely generated self-similar set. Note that if $K$ is an infinitely generated self-similar set for a relatively compact, contractive $\Phi$, then $K$ satisfies the relation $K=\cup_{i=1}^{\infty}\phi_{i}(K)$. When $\Phi$ is compact then the analogy with self-similar sets can be made more precise via a result of Wicks from \cite{Wicks}. In \cite{Wicks} it is shown that if $\Phi$ is contractive and compact then the set valued map sending $X$ to $\bigcup_{i=1}^{\infty}\phi_i(X)$ is a contraction on the space of non-empty compact sets equipped with the Hausdorff metric. Moreover, there exists a unique non-empty compact set $K\subset \mathbb{R}^d$ satisfying $$K=\bigcup_{i=1}^{\infty}\phi_i(K).$$ The set $K$ is the infinitely generated self-similar set of $\Phi$ as defined by Moran. 

Moran proved in \cite{Mor} that if $\Phi$ is contractive, relatively compact, satisfies the open set condition, and $\sum_{i=1}^{\infty}r_i^d=1,$ then the infinitely generated self-similar set has positive Lebesgue measure. When $\Phi$ is also compact, it can be shown that the proof of Schief from \cite{Sch} still applies, and one can show that the infinitely generated self-similar set must have non-empty interior. As such the motivating question of Peres and Solomyak has a natural analogue for infinite iterated function systems: Do there exist infinitely generated self-similar sets with positive Lebesgue measure and empty interior? The main result of this paper answers this question in the affirmative and provides an explicit example.

%In our example below the underlying $\Phi$ is compact in the space of contraction and so $K$ will be well defined.
%Given $j\in\mathbb{N}$ consider the following collection of contractions:

In this paper we consider the following collection of similarities:

\begin{align*}
U(x,y)&:=\left(\frac{x}{2},\frac{y+1}{2}\right),\\
D_{0}(x,y)&:=\left(\frac{x}{2},\frac{y}{2}\right),\\
D_{k,n}(x,y)&:=\left(\frac{x+t_{k,n}}{2},\frac{y}{2}\right),\, \textrm{ where }t_{k,n}:=\frac{1}{2^k\cdot n},\,k\geq 0,n\geq 1.
\end{align*} We let $\Phi$ denote this set of contractions. $\Phi$ is compact in the space of contractions on $\mathbb{R}^2.$ This is because a sequence of contractions in $\Phi$ either contains $D_{k,n}$ for arbitrarily large values of $k$ or $n,$ or the sequence contains only finitely many elements from $\Phi.$ In the latter case the sequence trivially has a convergent subsequence. In the former case it is clear that one can pick a subsequence converging to $D_0$. Therefore $\Phi$ is compact and we can apply the result of Wicks to assert that there exists a unique non-empty compact $K\subset\mathbb{R}^2$ satisfying
\begin{equation}
\label{decomposition}
K=U(K)\cup D_0(K)\cup \bigcup_{\substack{k\geq 0\\ n\geq 1}} D_{k,n}(K).
\end{equation}
In Figure~\ref{Fig1} below we include an approximation of the set $K$ using finitely many of the maps in $\Phi$.

\begin{figure}
\includegraphics[width=350pt]{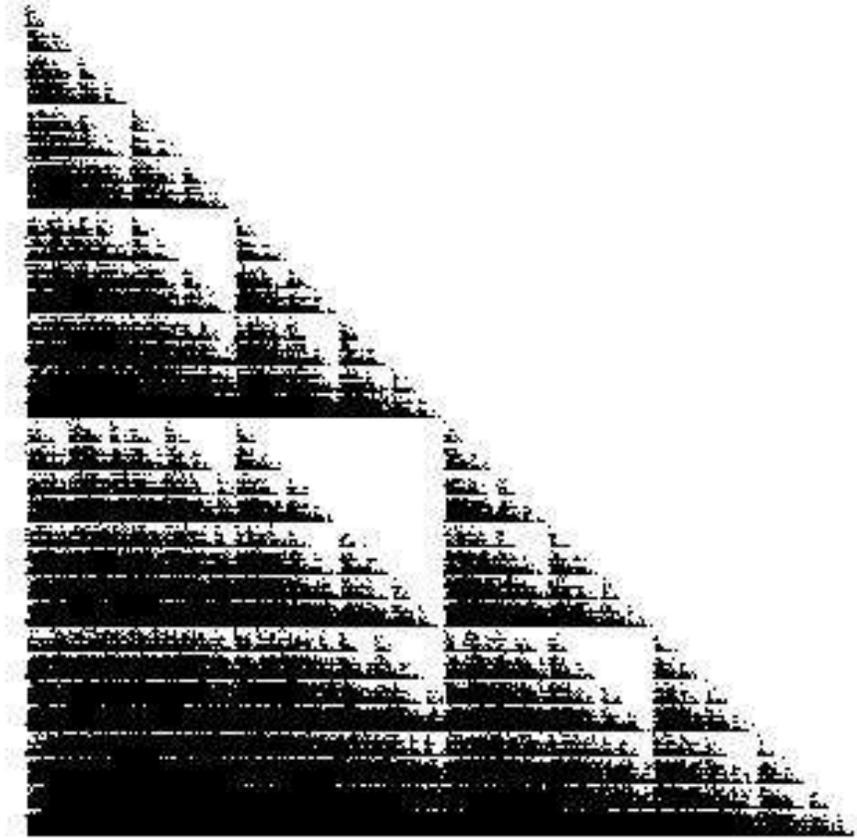}
\caption{An approximation of $K$ using finitely many similarities.}
\label{Fig1}
\end{figure}

%\begin{figure}[h]
%\includegraphics[width=12cm]{Approximation}
%	\caption{An approximation of $K$ using finitely many similarities.}
%	\label{Fig1}
%\end{figure}

 Given a sequence $(a_i)\in\{0,1\}^{\mathbb{N}}$ let $Z(a_i):=\{i\in\mathbb{N}:a_i=0\}$. Using this notation, we have the following simple expression for $K$: 
 \begin{equation}
 \label{simple}
 K=\left\{\left(\sum_{i\in Z(a_i)}\frac{d_i}{2^{i}},\sum_{i=1}^{\infty}\frac{a_i}{2^i}\right): (a_i)\in\{0,1\}^{\mathbb{N}},\, d_i\in \{0\}\cup\{t_{k,n}\}_{\substack{k\geq 0\\ n\geq 1}}\right\}.
 \end{equation} One can verify that this equation is correct by observing that the right-hand side satisfies \eqref{decomposition}. In this paper we prove the following theorem.
\begin{theorem}
	\label{Main theorem}
$K$ has positive Lebesgue measure and empty interior. Moreover, Lebesgue almost every nontrivial horizontal fibre contains an interval.
\end{theorem}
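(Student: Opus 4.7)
The plan is to derive all three assertions from an analysis of the horizontal fibres $K_y:=\{x\in\R:(x,y)\in K\}$. From~\eqref{simple}, every non-dyadic $y\in[0,1)$ with binary expansion $(a_i)$ gives
$$K_y=\sum_{i\in Z(a_i)}\frac{S}{2^i},\qquad S:=\{0\}\cup\{1/m:m\ge 1\},$$
a Minkowski sum. Peeling off the first binary digit yields the recursion
$$K_y=\tfrac12 S+\tfrac12 K_{T(y)}\ \text{for}\ y<\tfrac12,\qquad K_y=\tfrac12 K_{T(y)}\ \text{for}\ y\ge\tfrac12,$$
where $T(y)=2y\bmod 1$ is the doubling map; in either case $K_y\supseteq\tfrac12 K_{T(y)}$. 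Let $E$ be the Borel set of $y$ for which $K_y$ contains an interval of positive length. Then $T^{-1}(E)\subseteq E$, and since $T$ preserves Lebesgue measure this forces $T^{-1}(E)=E$ modulo null sets; ergodicity of the doubling map yields $\lambda(E)\in\{0,1\}$.

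The decisive step is to prove $\lambda(E)>0$, which will then force $\lambda(E)=1$ and, by Fubini, $\lambda_2(K)=\int_0^1\lambda_1(K_y)\,dy>0$. I plan to exploit the $1/m(m+1)$ spacing of $S$ near $0$: if $A$ is an interval of length $L$, the translates $\{s/2+A/2:s\in S\}$ appearing in $\tfrac12 S+\tfrac12 A$ sit at positions $1/(2m)$ with consecutive gaps $1/(2m(m+1))$, so they merge once $m\gtrsim 1/\sqrt L$, producing an interval of length at least $L/2+c\sqrt L$ for some explicit $c>0$. Thus the Minkowski operation $A\mapsto\tfrac12 S+\tfrac12 A$ amplifies small intervals. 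A base case is supplied by terminating dyadic rationals $y=k/2^N$: there $Z(a_i)\supseteq\{N+1,N+2,\dots\}$ and a direct computation gives $K_y\supseteq\sum_{i>N}S/2^i=[0,2^{-N}]$. Combining this base case with the amplification along the $T$-orbit, and invoking Birkhoff's ergodic theorem to note that for a.e.\ $y$ the orbit $T^n(y)$ visits every neighbourhood of $0$ infinitely often (equivalently, the binary expansion of $y$ contains arbitrarily long zero runs at unbounded positions), one concludes that $K_y$ contains an interval for a.e.\ $y$.

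For empty interior I argue by contradiction. Suppose $B((x_0,y_0),r)\subseteq K$; then every $y\in(y_0-r,y_0+r)$ must satisfy $K_y\supseteq(x_0-\rho_y,x_0+\rho_y)$ with $\rho_y:=\sqrt{r^2-(y-y_0)^2}>0$. I will then produce, arbitrarily close to $y_0$, a dyadic rational $y^*=k/2^n$ whose fibre $K_{y^*}$ has an explicit gap of width exceeding $2\rho_{y^*}$ straddling $x_0$. Such gaps already appear at small scale: a direct Minkowski calculation gives $K_{3/8}=[0,3/8]\cup[1/2,5/8]$, the gap arising because the isolated element $\tfrac12\in S/2$ fails to merge with the cluster of $S/2$ accumulating at $0$. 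By varying $(k,n)$ the position of this gap can be placed near any prescribed $x_0$, while the density of dyadic rationals also lets us take $y^*$ arbitrarily close to $y_0$, contradicting $B\subseteq K$.

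The main technical obstacle is establishing $\lambda(E)>0$. Making the amplification argument rigorous requires tracking how interval lengths evolve through the infinite Minkowski sum, in particular controlling what happens during the (possibly long) stretches between consecutive returns of the $T$-orbit to small neighbourhoods of $0$. The quadratic-in-$m$ spacing of $S$ at the origin is precisely what makes the amplification strong enough to survive these excursions; a significantly sparser accumulation would produce fibres that are measure-zero Cantor-like sets rather than sets containing intervals.
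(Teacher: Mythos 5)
Your reduction to fibres, the identification of the digit set as $S=\{0\}\cup\{1/m\}_{m\ge 1}$, the recursion $K_y\supseteq\frac12 S+\frac12 K_{T(y)}$, and the zero--one law via ergodicity of the doubling map are all correct, and the empty-interior sketch is morally sound (though your quantifiers are off: taking $y^*$ \emph{close} to $y_0$ makes $\rho_{y^*}$ close to $r$, so you would need a gap of fixed macroscopic width $\approx 2r$ at the prescribed $x_0$; you must instead push $y^*$ towards $y_0\pm r$ so that $\rho_{y^*}\to 0$, and then still justify that the fibre $K_{y^*}=\bigl(X'+[0,2^{-n}]\bigr)\cup X''$ has a gap near $x_0$ of width exceeding $2^{-n}$ --- nowhere density of $X'$ alone does not give this, which is exactly the quantitative point the paper handles with its triangle covering and the refinement $K_{y'}\subseteq[0,1-y']$). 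The decisive gap, however, is the one you yourself flag: you never establish $\lambda(E)>0$, and the mechanism you propose cannot do so. The amplification map $L\mapsto L/2+c\sqrt L$ fixes $0$, so it can only grow an interval that is already present in some $K_{T^n(y)}$; for Lebesgue-a.e.\ $y$ no iterate $T^n(y)$ is a dyadic rational, so your base case is never reached along the orbit. The fallback ``arbitrarily long zero runs'' fails too: a run of zeros at positions $M+1,\dots,M+L$ contributes only the \emph{finite} Minkowski sum $\sum_{i=M+1}^{M+L}S/2^i$, which is countable and compact (this is precisely the paper's Lemma~\ref{compact}/\ref{nowhere dense}) and hence contains no interval, no matter how large $L$ is. So the zero--one law reduces the theorem to its hard core without providing any seed of positive measure.

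The paper closes exactly this gap by a different idea that your proposal never invokes: the digit set is closed under division by powers of $2$, i.e.\ $t_{k,n}=t_{0,n}/2^k$, so $S/2^k\subseteq S$, and therefore a nonzero digit sitting at position $j$ can be \emph{relocated} to any earlier position $i<j$ by replacing $1/n$ with $1/(2^{j-i}n)$. Lemma~\ref{base8} shows every $x\in[0,1/56]$ has an expansion $x=\sum d_i^*/2^i$ whose nonzero digits occupy only positions in an arithmetic progression of density $1/3$; since for a.e.\ $y$ the zero set $Z(a_i)$ has density at least $0.4>1/3$ in every sufficiently long initial window, a greedy matching produces a bijection $f:Z(a_i)\to\{i:d_i^*\neq 0\}$ with $f(i)>i$, and relocating each digit backwards along $f$ exhibits $x$ as an element of $K_y$. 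This yields $[0,\tfrac{1}{56\cdot 2^N}]\subseteq K_y$ for all $y$ in a set of measure $>1-\epsilon$ directly, with no bootstrapping required. If you want to salvage your route, you need a replacement for the base case that works on a positive-measure set of $y$; any such replacement will, in effect, have to use the self-improving structure $S/2^k\subseteq S$ of the digit set, because a generic fibre is an infinite Minkowski sum over a positive-density but irregular set of positions, and no finite truncation of it contains an interval.
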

\section{Proof of Theorem~\ref{Main theorem}}
We split our proof of Theorem~\ref{Main theorem} into two parts. We begin by proving that $K$ has empty interior.

\begin{lemma}
	\label{compact}
	For any $m\in\mathbb{N}$ the set 
\begin{equation}\label{eq:Xm}
X_m:=\left\{\sum_{i=1}^m\frac{d_i}{2^{i}}: d_{i}\in\{0\}\cup \{t_{k,n}\}_{\substack{k\geq 0\\ n\geq 1}}\right\}
\end{equation}
is compact.
\end{lemma}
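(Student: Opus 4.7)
The plan is to show that $X_m$ is a continuous image of an $m$-fold Cartesian product of a compact set in $\mathbb{R}$, hence compact. Set
$$T := \{0\} \cup \{t_{k,n} : k \geq 0,\, n \geq 1\} \subset [0,1].$$
Then $X_m$ is the image of $T^m$ under the continuous map $(d_1,\dots,d_m) \mapsto \sum_{i=1}^m d_i/2^i$, so once we know $T$ is compact, the conclusion is immediate (finite products of compacts are compact, and continuous images of compacts are compact).

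The only real work, therefore, is to verify that $T$ is compact in $\mathbb{R}$. Boundedness is clear since $0 \leq t_{k,n} = 1/(2^k n) \leq 1$. For closedness, I would argue that the unique accumulation point of $\{t_{k,n}\}_{k\geq 0,\,n\geq 1}$ lying outside this doubly indexed set is $0$, which itself belongs to $T$. Concretely, suppose $t_{k_j,n_j}\to \ell$ with $\ell > 0$; then $2^{k_j}n_j = 1/t_{k_j,n_j}$ is bounded, so the pairs $(k_j,n_j)$ take only finitely many values, forcing the sequence to be eventually constant and $\ell$ to equal some $t_{k,n} \in T$. On the other hand, any sequence in $T$ whose indices are unbounded (in either coordinate) satisfies $t_{k_j,n_j}\to 0 \in T$. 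This exhausts the possibilities, so $T$ is closed.

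I do not expect any serious obstacle here; the only point worth being careful about is the case analysis in the limit argument, specifically making sure that a mixed sequence in which both $k_j$ and $n_j$ vary still cannot have a nonzero limit outside $T$. Once $T$ is shown to be compact, the passage to $X_m$ via the continuous linear map above is routine.
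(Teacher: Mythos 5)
Your proof is correct, but it is packaged differently from the paper's. The paper argues directly by sequential compactness of $X_m$: given a sequence in $X_m$, it extracts, coordinate by coordinate, a subsequence along which each digit $d_i^{(l)}$ converges to an element of the digit set (using the dichotomy that a sequence of digits either takes infinitely many distinct values, in which case a subsequence tends to $0$, or takes finitely many values, in which case a subsequence is constant). You instead isolate the real content as the compactness of the digit set $T=\{0\}\cup\{t_{k,n}\}$ itself, prove it is closed and bounded via the reciprocal trick ($2^{k}n=1/t_{k,n}$ bounded forces finitely many index pairs), and then get $X_m$ for free as the continuous image of the compact product $T^m$. The two arguments rest on the same underlying fact, but yours is cleaner and more reusable (it immediately gives the remark after Proposition~\ref{interior} about general countable compact digit sets), while the paper's hands-on subsequence extraction avoids invoking Tychonoff-type product facts. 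One small wording point: the claim that ``any sequence in $T$ whose indices are unbounded satisfies $t_{k_j,n_j}\to 0$'' is not literally true (such a sequence need not converge); what you mean, and what suffices for closedness, is that if a \emph{convergent} sequence has unbounded indices then it has a subsequence tending to $0$, hence its limit is $0\in T$. With that phrasing fixed, the case analysis is exhaustive and the proof is complete.
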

\begin{proof}
Let $(x_l)_{l=1}^{\infty}$ be a sequence in $X_m$. Each $x_l$ can be written as 
$$
x_l:=\sum_{i=1}^m\frac{d_{i}^{(l)}}{2^i},
$$ 
for some $\bigl(d_i^{(l)}\bigr)\in\left\{\{0\}\cup \{t_{k,n}\}_{\substack{k\geq 0\\ n\geq 1}}\right\}^m.$ If $(d_1^{(l)})$ consists of infinitely many different terms then there must exist a subsequence $(d_1^{(l_j)})$ such that $d_1^{(l_j)}\to 0$ as $j\to \infty$. Alternatively, $(d_1^{(l)})$ consists of finitely many terms and there exists $c_1\in \{0\}\cup \{t_{k,n}\}_{\substack{k\geq 0\\ n\geq 1}},$ and a subsequence $\bigl(d_1^{(l_j)}\bigr)$ such that  $d_{1}^{(l_j)}=c_1$ for all $j$. In either case we have found a $c_1\in \{0\}\cup \{t_{k,n}\}_{\substack{k\geq 0\\ n\geq 1}}$ and a subsequence $(d_1^{(l_j)})$ such that $d_1^{(l_j)}\to c_1$ as $j\to\infty$. Replacing $(d_1^{(l)})$ with $(d_2^{(l_j)})$ in the above argument, we see that there exists a subsequence of $(d_2^{(l_j)})$ which converges to an element of $\{0\}\cup \{t_{k,n}\}_{\substack{k\geq 0\\ n\geq 1}}$. Repeating this argument for each remaining component, we may assert that there exists $(c_1,\ldots,c_m)\in \left\{\{0\}\cup \{t_{k,n}\}_{\substack{k\geq 0\\ n\geq 1}}\right\}^m$ and a subsequence $(l_j^{*})$ such that $$\sum_{i=1}^m\frac{d_{i}^{(l_j^*)}}{2^i}\to \sum_{i=1}^m\frac{c_i}{2^i}.$$ Therefore $X_m$ is compact.
	
\end{proof}
\begin{lemma}
	\label{nowhere dense}
	For any $m\in\mathbb{N}$, the set $X_m$ given by (\ref{eq:Xm}) %:=\{\sum_{i=1}^md_i2^{-i}: d_{i}\in\{0\}\cup \{t_{k,m}\}_{\substack{k\geq 0\\ n\geq 1}}\}$ 
is nowhere dense in $\mathbb{R}$.
\end{lemma}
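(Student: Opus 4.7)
The plan is to observe that $X_m$ is at most countable, and to combine this with the compactness already established in Lemma~\ref{compact}. First I would note that the digit set $D:=\{0\}\cup\{t_{k,n} : k\geq 0,\, n\geq 1\}$ is countable: it is indexed by the union of $\{0\}$ and the countable set $\{(k,n):k\geq 0,n\geq 1\}$. Consequently the Cartesian product $D^m$ is countable, and since $X_m$ is the image of $D^m$ under the map
\[
(d_1,\ldots,d_m)\longmapsto \sum_{i=1}^m\frac{d_i}{2^i},
\]
the set $X_m$ is itself (at most) countable.

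Next I would invoke Lemma~\ref{compact} to conclude that $X_m$ is compact, and in particular closed in $\mathbb{R}$. Since every nonempty open interval in $\mathbb{R}$ is uncountable, a countable closed subset of $\mathbb{R}$ cannot contain any interval. Hence $X_m$ has empty interior, and a closed set with empty interior is by definition nowhere dense.

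There is essentially no obstacle here: the lemma is immediate from the countability of the digit set $D$ together with the compactness established in the previous lemma. The only thing to guard against is over-engineering the argument — one might be tempted to exhibit an explicit dense family of gaps in $X_m$, but for the stated conclusion this is unnecessary, since the cardinality argument already closes off any interior.
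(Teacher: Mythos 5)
Your argument is correct and is essentially the paper's own: both use compactness (hence closedness) from Lemma~\ref{compact} together with the countability of $X_m$ to rule out $X_m$ containing any interval, which for a closed set is exactly nowhere density. The extra detail you supply on why $X_m$ is countable is harmless but not a different route.
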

\begin{proof}
If $X_m$ were dense within some subinterval of $\mathbb{R},$ it would follow from the compactness of $X_m$ established in Lemma~\ref{compact} that $X_m$ would in fact contain this subinterval. However $X_m$ is countable, and so this is not possible.
\end{proof}Recall that $\Phi$ is the set of similarities defining $K$. In what follows it will be useful to enumerate this set in the following way, $\Phi:=\{\phi_i\}_{i=1}^{\infty}$.

Let $\Delta$ be the right angled triangle uniquely determined by the three points $(0,0)$, $(1,0)$, and $(0,1)$. Before proceeding with our proof that $K$ has empty interior we state some properties of $\Delta$. It is a straightforward observation that for any $m\in\mathbb{N}$ we have 
\[
\bigcup_{(\phi_i)_{i=1}^m\in \Phi^m}(\phi_1\circ \cdots \circ \phi_m)(\Delta) \subseteq \bigcup_{(\phi_i)_{i=1}^{m-1}\in \Phi^{m-1}}(\phi_1\circ \cdots \circ \phi_{m-1})(\Delta) \subseteq \cdots \subseteq \bigcup_{\phi_i\in \Phi}\phi_i(\Delta)\subseteq \Delta.
\] 
Therefore, since the set valued map $X\mapsto \bigcup_{\phi_i\in\Phi} \phi_i(X)$ is a contraction on the space of non-empty compact subsets equipped with the Hausdorff topology, we may assert that for any $m\in\mathbb{N}$ we have
\begin{equation}
\label{inclusion}
K\subseteq \bigcup_{(\phi_i)_{i=1}^m\in \Phi^m}(\phi_1\circ \cdots \circ \phi_m)(\Delta).
\end{equation} We also remark that
\begin{equation}
\label{strip}(\phi_1\circ \cdots \circ \phi_m)(\Delta)\subseteq \mathbb{R}\times \left[\,\sum_{i:\phi_i= U}\frac{1}{2^i},\sum_{i:\phi_i= U}\frac{1}{2^i}+\frac{1}{2^m}\right].
\end{equation}In other words $(\phi_1\circ \cdots \circ \phi_m)(\Delta)$ is contained in a unique horizontal strip of height $2^{-m}$ which is determined by the occurrences of the similarity $U$ within $(\phi_1,\ldots,\phi_m)$.
\begin{proposition}
	\label{interior}
	The set $K$ has empty interior.
\end{proposition}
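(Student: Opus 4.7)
The plan is to argue by contradiction, combining the triangle covering \eqref{inclusion} with the observation that horizontal cross-sections of $(\phi_1\circ\cdots\circ\phi_m)(\Delta)$ shrink to a point as $y$ approaches the top of the relevant dyadic strip. Suppose $K$ has nonempty interior, so that $[x_0-\delta,x_0+\delta]\times[y_0-\delta,y_0+\delta]\subseteq K$ for some $(x_0,y_0)$ and some $\delta>0$. I would fix $m\in\mathbb{N}$ with $2^{-m}<\delta$, let $(a_i)$ be the eventually-zero binary expansion of $y_0$, and set $b:=\sum_{i=1}^m a_i/2^i$ and $y_c:=b+2^{-m}$; one verifies that $y_0<y_c<y_0+\delta$, so $y_c$ sits strictly inside the box.

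A direct computation gives
\[
\phi_1\circ\cdots\circ\phi_m(x,y)=\left(2^{-m}x+a_\phi,\,2^{-m}y+b_\phi\right),
\]
where $b_\phi=\sum_{i:\phi_i=U}2^{-i}$ and $a_\phi=\sum_{i=1}^m s_i/2^i$, with $s_i\in\{0\}\cup\{t_{k,n}\}_{k\geq 0,n\geq 1}$ the $x$-translation built into $\phi_i$. Thus $(\phi_1\circ\cdots\circ\phi_m)(\Delta)$ is the right triangle with bottom-left corner $(a_\phi,b_\phi)$, legs of length $2^{-m}$, and hypotenuse along $x+y=a_\phi+b_\phi+2^{-m}$. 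For $y\in(b,y_c)$, the strip inclusion \eqref{strip} combined with the fact that $b$ and $b_\phi$ are both multiples of $2^{-m}$ forces $b_\phi=b$; this in turn pins the $U$-pattern of $(\phi_i)_{i\leq m}$ to $(a_i)_{i\leq m}$, so $a_\phi$ is forced to lie in
\[
A_m:=\left\{\sum_{i\in Z_m}\frac{s_i}{2^i}:s_i\in\{0\}\cup\{t_{k,n}\}_{k\geq 0,\,n\geq 1}\right\},\qquad Z_m:=\{i\leq m:a_i=0\}.
\]
Consequently every $(x,y)\in K$ with $y\in(b,y_c)$ satisfies $a_\phi\leq x\leq a_\phi+(y_c-y)$ for some $a_\phi\in A_m$.

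The decisive step is to let $y\uparrow y_c$. Choose $y_n\in(y_0,y_c)$ with $y_n\to y_c$; these $y_n$ lie in the box because $y_c<y_0+\delta$. For every $x\in[x_0-\delta,x_0+\delta]$ the box hypothesis gives $(x,y_n)\in K$, producing some $a_n\in A_m$ with $0\leq x-a_n\leq y_c-y_n\to 0$. Hence $x\in\overline{A_m}$ for each such $x$, so $[x_0-\delta,x_0+\delta]\subseteq\overline{A_m}$. Filling the positions $i\notin Z_m$ with $d_i=0$ exhibits $A_m$ as a subset of $X_m$, and since $X_m$ is closed by Lemma~\ref{compact} we deduce $\overline{A_m}\subseteq X_m$, contradicting the nowhere-denseness of $X_m$ from Lemma~\ref{nowhere dense}.

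The main subtlety to watch is that the $U$-pattern rigidity used above holds only on the open strip $(b,y_c)$, not at its lower endpoint $y=b$ where triangles from the cell immediately below can also meet the line $\mathbb{R}\times\{y\}$. Fixing the eventually-zero expansion places $y_0$ in $[b,y_c)$ and lets the approximating sequence $y_n$ be chosen strictly in $(b,y_c)$, so the rigidity is available throughout the limit and no further case analysis is needed; everything else is routine bookkeeping of the images $\phi_1\circ\cdots\circ\phi_m(\Delta)$ together with an appeal to Lemmas~\ref{compact} and~\ref{nowhere dense}.
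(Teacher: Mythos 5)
Your argument is correct and follows essentially the same route as the paper: cover $K$ by the $m$-fold images of $\Delta$ via \eqref{inclusion}, use the dyadic strip \eqref{strip} to pin the $U$-pattern, and exploit the fact that as $y$ approaches the top of the strip the triangles pinch to apexes whose $x$-coordinates lie in the closed, nowhere dense set $X_m$ of Lemmas~\ref{compact} and~\ref{nowhere dense}. The only cosmetic differences are that you extract the contradiction by a limit $y\uparrow y_c$ (concluding $[x_0-\delta,x_0+\delta]\subseteq X_m$) where the paper instead exhibits an explicit sub-box missed by all the triangles, and that ``eventually-zero binary expansion'' should read ``expansion not ending in all $1$s'' so that $b\leq y_0<y_c$ holds for every $y_0$.
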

\begin{proof}
For the purpose of deriving a contradiction suppose that $K$ has nonempty interior. Then there exists $I\times J\subseteq K$ for two nontrivial open intervals $I$ and $J$. Without loss of generality we may assume that $$J=\left(\sum_{i=1}^m\frac{a_i}{2^i},\sum_{i=1}^m\frac{a_i}{2^i}+\frac{1}{2^m}\right)$$ for some $m\in\mathbb{N}$ and $(a_i)\in\{0,1\}^m.$ Let $\Phi_J:=\{(\phi_i)_{i=1}^m: \phi_i=U \textrm{ if and only if } a_i=1\}.$ It follows from \eqref{inclusion} and \eqref{strip} that
	\begin{equation}
	\label{2bcontradicted}I\times J\subseteq \bigcup_{(\phi_i)_{i=1}^m\in \Phi_J} (\phi_1\circ \cdots\circ\phi_m)(\Delta).
	\end{equation}%Each $(\phi_1\circ \cdots\circ\phi_m)(\Delta)$ is a scaled down image of $\Delta$ with no rotations or reflections.
	%Since each $(\phi_1\circ \cdots\circ\phi_m)(\Delta)$ is a scaled down copy of $\Delta$ without rotations or reflections we may assert that:
	%\begin{equation}
	%\label{fibre}
	%\Big\{(x,y):y=\sum_{i=1}^m\frac{a_i}{2^i}+\frac{1}{2^m}\Big\}\cap \bigcup_{(\phi_i)_{i=1}^m\in S_J} (\phi_1\circ \cdots\circ\phi_m)(\Delta)=\bigcup_{(\phi_i)_{i=1}^m\in S_J} (\phi_1\circ \cdots\circ\phi_m)((0,1))
	%\end{equation}
	
	Since $X_m$ is nowhere dense by Lemma~\ref{nowhere dense}, it follows that there exists an $x\in I$ and $r\in(0,2^{-m})$ such that $(x-r,x+r)\subset I$ and $(x-r,x+r)\cap X_m=\emptyset$. The $x$-coordinate of  $(\phi_1\circ \cdots\circ\phi_m)((0,1))$ is the image of $0$ under the concatenation of $m$ maps from the set $\Bigl\{\frac{x+d_i}{2}:d_i\in\{0\}\cup\{t_{k,n}\}_{\substack{k\geq 0\\ n\geq 1}}\Bigr\}$. It follows that the $x$-coordinate of $(\phi_1\circ \cdots\circ\phi_m)((0,1))$ is contained in $X_m$.  Therefore
	\begin{equation}
	\label{empty}
	\left[(x-r,x+r)\times \left\{\sum_{i=1}^{m}\frac{a_i}{2^i}+\frac{1}{2^m}\right\}\right]\cap 	\bigcup_{(\phi_i)_{i=1}^m\in \Phi_J} (\phi_1\circ \cdots\circ\phi_m)((0,1))=\emptyset.
	\end{equation} Since each $(\phi_1\circ \cdots\circ\phi_m)(\Delta)$ is a right angled triangle whose two other angles are $\pi/4$ and whose apex is at $(\phi_1\circ \cdots\circ\phi_m)((0,1))$ with $y$-coordinate $\sum_{i=1}^{m}\frac{a_i}{2^i}+\frac{1}{2^m},$  it follows from \eqref{empty} and simple geometric considerations that $$\Big(x-\frac{r}{3},x+\frac{r}{3}\Big)\times \Big(\sum_{i=1}^m\frac{a_i}{2^i}+\frac{1}{2^m}-\frac r3,\sum_{i=1}^m\frac{a_i}{2^i}+\frac{1}{2^m}\Big)$$ has empty intersection with $$ \bigcup_{(\phi_i)_{i=1}^m\in \Phi_J} (\phi_1\circ \cdots\circ\phi_m)(\Delta).$$ This contradicts \eqref{2bcontradicted}. Therefore we may conclude that $K$ has empty interior.		
\end{proof}
\begin{remark}
	Proposition \ref{interior} holds more generally when $\Phi$ is any IFS consisting of $U$ and $D_j(x,y)=\left(\frac{x+s_j}{2},\frac{y}{2}\right)$ where $\{s_j\}_{j=1}^{\infty}$ is a countable compact subset of $[0,1]$ containing zero.
\end{remark}
We now move on to the second half of the proof of Theorem~\ref{Main theorem}, namely proving that $K$ has positive Lebesgue measure and Lebesgue almost every nontrivial horizontal fibre of $K$ contains an interval. We start with the following straightforward lemma.

\begin{lemma}
	\label{base8}
	For any $x\in[0,1/56]$ there exists $(\tilde{d}_i(x))\in\{\{0\}\cup \{1/n\}_{n\geq 1}\}^{\mathbb{N}}$ such that $$x=\sum_{i=1}^{\infty}\frac{\tilde{d}_i(x)}{8^i}.$$
\end{lemma}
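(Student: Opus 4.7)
The plan is to construct the digits $\tilde{d}_i(x)$ greedily and show by induction that the tail of the partial-sum approximation stays inside $[0,1/56]$, which both guarantees that the chosen digits lie in $\{0\}\cup\{1/n\}_{n\geq 1}$ and forces the series to converge to $x$.

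Given $x\in[0,1/56]$, set $y:=8x\in[0,1/7]$. I would choose $\tilde d_1(x)$ so that $x^{(1)}:=8x-\tilde d_1(x)$ again lies in $[0,1/56]$, and then iterate. Concretely: if $y\leq 1/56$, set $\tilde d_1(x):=0$ so $x^{(1)}=y\in[0,1/56]$; otherwise, set $\tilde d_1(x):=1/n$ where $n:=\lceil 1/y\rceil$. Since $y\in(1/56,1/7]$ this forces $n\geq 7$, so $\tilde d_1(x)\in\{1/n\}_{n\geq 1}$.

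The key verification is $x^{(1)}=y-1/n\in[0,1/56]$. The lower bound is immediate from $n\geq 1/y$. For the upper bound, note that by definition of $\lceil\cdot\rceil$ we have $1/n\leq y<1/(n-1)$ in the nontrivial case, so
\[
0\leq y-\frac{1}{n}<\frac{1}{n-1}-\frac{1}{n}=\frac{1}{n(n-1)}.
\]
For $n\geq 8$ this is at most $1/56$. The only remaining case is $n=7$, which since $y\leq 1/7$ forces $y=1/7$; then $y-1/n=0$. Thus in every case $x^{(1)}\in[0,1/56]$, and the constant $56$ is chosen precisely so that $n(n-1)\geq 56$ for the smallest admissible $n\geq 8$ after the exceptional endpoint is handled.

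Iterating the construction produces digits $\tilde d_i(x)\in\{0\}\cup\{1/n\}_{n\geq 1}$ together with remainders $x^{(i)}\in[0,1/56]$ satisfying
\[
x=\sum_{i=1}^{N}\frac{\tilde d_i(x)}{8^i}+\frac{x^{(N)}}{8^N}
\]
for every $N\in\mathbb{N}$. Since $x^{(N)}/8^N\leq (1/56)\cdot 8^{-N}\to 0$, the series converges to $x$, completing the proof. The only nontrivial step is the arithmetic bound $y-1/n\leq 1/56$ for the greedy choice, and this is what dictates the endpoint $1/56$ in the statement of the lemma.
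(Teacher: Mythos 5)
Your proposal is correct and follows essentially the same greedy algorithm as the paper: at each stage multiply by $8$ and subtract the largest admissible digit, using the arithmetic fact that consecutive reciprocals near $1/7$ differ by at most $1/56$ to keep the remainder in $[0,1/56]$. The only cosmetic difference is that you bound the remainder by $\frac{1}{n(n-1)}$ with $n=\lceil 1/y\rceil\geq 8$ (handling $n=7$ as an endpoint case), whereas the paper uses $\frac{1}{n}-\frac{1}{n+1}=\frac{1}{n(n+1)}\leq\frac{1}{56}$ for $n\geq 7$; both yield the same conclusion.
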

\begin{proof}
For any $x\in [0,\frac{1}{56}]$ we have $8x\in [0,\frac{1}{7}].$ For any $n\geq 7$ one can check that $\frac{1}{n}-\frac{1}{n+1}=\frac{1}{n(n+1)}\leq \frac{1}{56}.$ Therefore, picking the largest element of $\{0\}\cup \{1/n\}_{n\geq 1}$ less than $8x$ yields a $\tilde{d}_1(x)\in \{0\}\cup \{1/n\}_{n\geq 1}$ such that $8x-\tilde{d}_1(x)\in [0,\frac{1}{56}].$ Let $x_1=8x-\tilde{d}_1(x)$, then $$x=\frac{\tilde{d}_1(x)}{8}+\frac{x_1}{8}.$$ Since $x_1\in[0,\frac{1}{56}]$ there exists $\tilde{d}_2(x)\in \{0\}\cup \{1/n\}_{n\geq 1}$ such that $8x_1-\tilde{d}_2(x)\in[0,\frac{1}{56}]$. Letting $x_2=8x_1-\tilde{d}_2(x)$ and substituting this equation into the above yields  $$x=\frac{\tilde{d}_1(x)}{8}+\frac{\tilde{d}_2(x)}{64}+\frac{x_2}{64}.$$ We then repeat the above step with $x_1$ replaced with $x_2.$ Clearly we can repeat this step indefinitely. Doing so yields a sequence $(\tilde{d}_i(x))$ with the desired properties.
\end{proof} 

\begin{remark}It is easy to see that one can take $(\tilde{d}_i(x))\in\{\{0\}\cup \{1/n\}_{n=1}^{248}\}^{\mathbb{N}}$, with the same conclusion. We will keep our set of $n$ infinite though, because
the set of $k$ in Theorem~\ref{Main theorem} cannot be made finite using our argument (see below). 
\end{remark}

Examining the representation obtained in Lemma~\ref{base8}, we see that it can be reinterpreted as a representation in base $2$ where the only nonzero digits are those occurring at positions within $3\mathbb{N}$. The fact that we can represent any $x\in[0,1/56]$ in base $2$ using a sequence where non-zero digits occur at most $1/3$ of the time will be a useful tool when it comes to showing that generic fibres contain intervals.

For each $y\in[0,1]$ let $$K_y:=\{x:(x,y)\in K\}.$$

\begin{proposition}
	\label{fibre}
For Lebesgue almost every $y\in[0,1]$ the set $K_y$ contains an interval.
\end{proposition}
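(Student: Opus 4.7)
The plan is to show that for almost every $y\in[0,1]$, with binary expansion $y=\sum a_i/2^i$, the fiber $K_y$ contains an interval of the form $[0,2^{-N'}/56]$, where $N'=N'(y)$ is chosen via the strong law of large numbers. The mechanism is to embed the base-$8$ representation supplied by Lemma~\ref{base8} into $K_y$, exploiting the flexibility of the family $\{t_{k,n}\}$.

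The starting point is the explicit description of the fiber coming from \eqref{simple}:
$$
K_y = \left\{\sum_{i\in Z(a_i)}\frac{d_i}{2^i} : d_i\in\{0\}\cup\{t_{k,n}\}_{k\geq 0,\, n\geq 1}\right\}.
$$
The key identity is $t_{k,n}/2^i = 1/(n\cdot 2^{k+i})$: placing the digit $t_{k,n}$ at a zero-position $i$ of $(a_i)$ contributes $1/(n\cdot 2^{k+i})$ to the $x$-coordinate, with $k\geq 0$ free. My strategy is then as follows. Suppose I can produce pairwise distinct indices $i_1<i_2<\cdots$ with $a_{i_j}=0$ and $i_j\leq 3j+N'$ for every $j\geq 1$. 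For any $x\in[0,1/56]$, Lemma~\ref{base8} supplies $\tilde{d}_j(x)\in\{0\}\cup\{1/n:n\geq 1\}$ with $x=\sum_j \tilde{d}_j(x)/8^j$. Setting $d_{i_j}:=t_{3j+N'-i_j,\,n_j}$ whenever $\tilde{d}_j(x)=1/n_j$, $d_{i_j}:=0$ when $\tilde{d}_j(x)=0$, and $d_i:=0$ at every other zero-position of $(a_i)$, I obtain a point of $K_y$ whose $x$-coordinate is exactly $2^{-N'}x$. Varying $x$ then forces $K_y\supseteq[0,2^{-N'}/56]$.

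The main obstacle is to construct the matching $(i_j)$. Writing $Z_m:=\#\{i\leq m:a_i=0\}$, the greedy choice $i_j:=$ (position of the $j$-th zero of $(a_i)$) satisfies $i_j\leq 3j+N'$ precisely when
$$
Z_{3j+N'}\geq j \quad\text{for every } j\geq 1.
$$
Here I would invoke the strong law of large numbers: for Lebesgue almost every $y$, $Z_m/m\to 1/2$, so there exists $J=J(y)$ with $Z_m\geq m/3$ for every $m\geq J$. Choosing $N':=J$, any $j\geq 1$ satisfies $3j+N'\geq J$ and hence $Z_{3j+N'}\geq (3j+N')/3\geq j$, as required. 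This establishes Proposition~\ref{fibre}; together with Proposition~\ref{interior} it completes Theorem~\ref{Main theorem}, since Fubini applied to a set with a.e. fiber of positive length yields $|K|>0$.
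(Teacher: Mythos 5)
Your proof is correct and follows essentially the same route as the paper: both rest on Lemma~\ref{base8} together with the observation that a digit $1/n$ at base-$8$ position $j$ (i.e.\ binary position $3j$, shifted by $N'$) can be relocated to an earlier zero-position $i\leq 3j+N'$ of $y$'s binary expansion as the digit $t_{3j+N'-i,\,n}$, the required matching existing because the zeros of a typical $y$ have density $\tfrac12>\tfrac13$. Your greedy choice of the $j$-th zero is a mild streamlining of the paper's block-by-block inductive construction of the bijection $f$, and your pointwise $N'(y)$ replaces the paper's uniform sets $A_N$ of measure $>1-\epsilon$, but the substance is identical.
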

\begin{proof}
Each $y\in[0,1]$ has a binary expansion $(a_i)\in\{0,1\}^{\mathbb{N}}$ satisfying $y=\sum_{i=1}^{\infty}a_i2^{-i}$. This expansion is unique for Lebesgue almost every $y\in[0,1]$. Moreover, it follows from the strong law of large numbers that the binary expansion of Lebesgue almost every $y\in[0,1]$ satisfies:
\begin{equation}
\label{normal}
\lim_{N\to\infty}\frac{\#\{1\leq i\leq N:a_i=0\}}{N}=\frac{1}{2}.
\end{equation}
Let $$A_N:=\Big\{y\in[0,1]:\#\{1\leq i\leq n:a_i=0\}\geq 0.4\cdot n\, ,\forall n\geq N \Big\}.$$ It follows from \eqref{normal} that for any $\epsilon>0,$ we can pick $N$ sufficiently large that $\mathcal{L}(A_{N})>1-\epsilon$. Here and throughout $\mathcal{L}$ denotes the one-dimensional Lebesgue measure. In what follows we assume that $\epsilon>0$ has been fixed and we have chosen $N$ sufficiently large.

Fix $y\in A_{N}$ and let $(a_i)\in\{0,1\}^{\mathbb{N}}$ be its binary expansion, which we may assume is unique. It follows from \eqref{simple} that $K_y$ has the following form:
\begin{equation}
\label{fibreequation}
K_y=\Big\{\sum_{i\in Z(a_i)}\frac{d_i}{2^{i}}: d_i\in \{0\}\cup\{t_{k,n}\}_{\substack{k\geq 0\\ n\geq 1}}\Big\},
\end{equation} where $Z(a_i)=\{i\in\mathbb{N}:a_i=0\}.$

Consider the interval $I_N:=[0,\frac{1}{56\cdot 2^N}].$ We will show that $I_N\subseteq K_{y}.$ It follows from Lemma~\ref{base8} that for each $x\in I_N,$ there exists a sequence $(d_i^*(x))\in\{\{0\}\cup \{1/n\}_{n\geq 1}\}^{\mathbb{N}}$ satisfying:
\begin{align}
x&=\sum_{i=1}^{\infty}\frac{d_i^*(x)}{2^i}, \label{equality}\\
d_i^*(x)&=0\, \textrm{ for } 1\leq i \leq N,\label{Nzeros}\\
d_i^*(x)&\neq 0\implies i-N=0\, (\textrm{mod }3)\, \label{Mod3}.
\end{align}

Fix $x\in I_N$ and let $(d_i^*(x))$ satisfy the above. In what follows we will assume that $d_i^*(x)\neq 0$ for infinitely many $i$. The case where $(d_i^*(x))$ ends with infinitely many zeros is handled similarly. To prove that $x\in K_y$ we will construct a bijection $f:Z(a_i)\to \{i:d_i^*(x)\neq 0\}$ such that $f(i)>i$ for all $i\in Z(a_i).$

To construct $f$, we first define the following sets. For each $k\geq 1$ let $$W_k:=\{kN+1\leq i \leq (k+1)N:d_i^*(x)\neq 0\}.$$ Let $S_{1}$ be the first $\#W_1$ elements of the set $\{1\leq i \leq N:a_i=0\},$ and for $k\geq 2$ let $S_k$ be the first $\#W_{k}$ elements of the set 
$$
\left\{1\leq i \leq kN:a_i=0\, \textrm{ and }i\notin \bigcup_{l=1}^{k-1}S_{l}\right\}.
$$ 
We claim that for each $k\geq 1$ the set $S_k$ is well defined and there is a bijection $f_k:S_k\to W_k$ satisfying $f_k(i)>i$ for all $i\in S_k$. We prove this claim via induction.
\\

\noindent \textbf{Step $1$.} It follows from \eqref{Mod3} that $\#W_1 \leq \lceil \frac{N}{3} \rceil.$ Here and throughout we let  $\lceil x \rceil:=\inf\{n\in\mathbb{N}:x\leq n\}$.  It follows from the definition of $A_N$ that $\#\{1\leq i\leq N:a_i=0\}\geq 0.4N$. Without loss of generality we may assume that $N$ is sufficiently large that $0.4N>\lceil \frac{N}{3}\rceil\geq \#W_1 $. For such an $N$ the set $S_1$ is well defined. We let $f_1:S_1\to W_1$ be an arbitrary bijection. Since every $i\in S_1$ satisfies $i\leq N$, and every element of $W_1$ is greater than or equal to $N+1,$ we may conclude that $f_1(i)>i$ for all $i\in S_1$.
\\

\noindent \textbf{Step $k+1$.}  Assume that for each $1\leq l \leq k$ the set $S_l$ is well defined. Moreover, assume that for each $1\leq l\leq k$ we have constructed a bijection $f_l:S_l\to W_l$ such that $f_l(i)>i$ for all $i\in S_l$. We now make our inductive step.

It follows from the definition of $A_{N}$ and $S_l$ that
\begin{align}
\label{bound}
\#\big\{1\leq i \leq (k+1)N:a_i=0\, \textrm{ and }i\notin \cup_{l=1}^{k}S_{l}\big\}&\geq 0.4(k+1)N-\sum_{l=1}^k\#W_j\nonumber\\ &\geq 0.4(k+1)N-k\Big\lceil \frac{N}{3}\Big\rceil \nonumber\\ &>\Big\lceil \frac{N}{3}\Big\rceil.
\end{align}By \eqref{Mod3} we have $\#W_{k+1}\leq \lceil \frac{N}{3}\rceil.$ Combining this observation with \eqref{bound} we see that the set $S_{k+1}$ is well defined. Let $f_{k+1}:S_{k+1}\to W_{k+1}$ be an arbitrary bijection. Since every element of $S_{k+1}$ is less than or equal to $(k+1)N,$ and every element of $W_{k+1}$ is greater than or equal to $(k+1)N+1,$ we have $f_{k+1}(i)>i$ for all $i\in S_{k+1}.$ This completes our inductive step and our claim holds.
\\

To construct our bijection $f:Z(a_i)\to \{i:d_i^*(x)\neq 0\}$ we start by remarking that $\{i:d_i^*(x)\neq 0\}=\bigcup_{k=1}^{\infty}W_k$ by \eqref{Nzeros}, and by our assumption $d_i^*(x)\neq 0$ for infinitely many $i$ we have $Z(a_i)=\bigcup_{k=1}^{\infty}S_k.$ We define our bijection $f:Z(a_i)\to \{i:d_i^*(x)\neq 0\}$ via the rule $f(i)=f_{k}(i)$ if $i\in S_k$. This is a bijection by our construction and the above remarks. Moreover, $f(i)>i$ for all $i\in Z(a_i),$ since $f_k(i)>i$ for all $i\in S_k$ for any $k\geq 1.$

Equipped with our bijection $f$ we can now show that $x\in K_y$. For each $i\in Z(a_i)$ we define
\begin{equation}
\label{substitution}
d_i^{**}:=\frac{d_{f(i)}^*(x)}{2^{f(i)-i}}.
\end{equation} Since $f(i)>i$ for all $i\in Z(a_i)$ and $(d_i^*(x))\in\{\{0\}\cup \{1/n\}_{n\geq 1}\}^{\mathbb{N}},$ we see that $d_i^{**}\in \{0\}\cup \{t_{k,n}\}_{\substack{k\geq 0\\ n\geq 1}}.$ Moreover, $$\sum_{i\in Z(a_i)}\frac{d_i^{**}}{2^{i}}\stackrel{\eqref{substitution}}{=}\sum_{i\in Z(a_i)}\frac{d_{f(i)}^*(x)}{2^{f(i)}}=\sum_{i:d_i^{*}(x)\neq 0}\frac{d_{i}^*(x)}{2^{i}}\stackrel{\eqref{equality}}{=}x.$$ In the second equality we used the fact $f:Z(a_i)\to\{i:d_i^{*}(x)\neq 0\} $ is a bijection. By \eqref{fibreequation} we have $x\in K_y.$ Since $x$ was arbitrary we may conclude that for every $y\in A_N$ the set $K_y$ contains $I_N$. Since $\mathcal{L}(A_N)>1-\epsilon$ and $\epsilon$ is arbitrary, we may conclude that for Lebesgue almost every $y\in[0,1]$ the set $K_y$ contains an interval.
\end{proof}
Theorem~\ref{Main theorem} now follows from Proposition~\ref{interior}, Proposition~\ref{fibre}, and Fubini's theorem.

We remark that the same method used in our proof of Theorem~\ref{Main theorem} can be used to give an explicit example of an infinitely generated self-affine set with positive Lebesgue measure and empty interior, which is not an infinitely generated self-similar set. Indeed, the contractions
\begin{align*}
U(x,y)&:=\left(\frac{x}{\sqrt{2}},\frac{y+1}{2}\right),\\
D_{0}(x,y)&:=\left(\frac{x}{\sqrt{2}},\frac{y}{2}\right),\\
D_{k,n}(x,y)&:=\left(\frac{x+t_{k,n}}{\sqrt{2}},\frac{y}{2}\right),\, \textrm{ where }t_{k,n}:=\frac{1}{2^{k/2}\cdot n},\,k\geq 0,n\geq 1,
\end{align*}yield an infinitely generated self-affine set with the desired properties.
\\

\noindent \textbf{Acknowledgements.} The authors are grateful to the referees whose comments greatly improved the content and exposition of this paper.

\end{document}